\newcommand{\C}{\mathcal{C}}
\newcommand{\one}{\mathbf{1}}
\newcommand{\ot}{\otimes}
\DeclareMathOperator{\coh}{H}
\DeclareMathOperator{\Ext}{Ext}
\DeclareMathOperator{\HH}{HH}
\DeclareMathOperator{\Hom}{Hom}
\numberwithin{equation}{section}
\newtheorem{theorem}[equation]{Theorem}
\newtheorem{corollary}[equation]{Corollary}
\newtheorem{lemma}[equation]{Lemma}
\newtheorem{example}[equation]{Example}
\title[Hopf algebra cohomology]
{Lie brackets on Hopf algebra cohomology}
\author{Tek\.{i}n Karada\u{g}}
\address{Department of Mathematics, Texas A\&M University, 
College Station, Texas 77843, USA}
\email{tekinkaradag@math.tamu.edu}
\author{Sarah Witherspoon}
\address{Department of Mathematics, Texas A\&M University, 
College Station, Texas 77843, USA}
\email{sjw@math.tamu.edu}
\date{January 23, 2021}
\thanks{Partially supported by NSF grants 1665286 and 2001163.}
\keywords{Hochschild cohomology, Hopf algebra cohomology, Gerstenhaber brackets, homotopy lifting.}
\begin{document}

\begin{abstract}
By work of Farinati, Solotar, and Taillefer,
it is known that the Hopf algebra cohomology of a quasi-triangular Hopf algebra,
as a graded Lie algebra under the Gerstenhaber bracket,
is abelian.
Motivated by the question of whether this holds for 
nonquasi-triangular Hopf algebras, 
we show that Gerstenhaber brackets on Hopf algebra cohomology
can be expressed via an arbitrary projective resolution using 
Volkov's homotopy liftings as 
generalized to some exact monoidal categories.
This is a special case of our more general result that
a bracket operation on cohomology is preserved under exact monoidal 
functors---one 
such functor is an embedding of Hopf algebra cohomology
into Hochschild cohomology. 
As a consequence, we show that this
Lie structure on Hopf algebra cohomology 
is abelian in positive degrees 
for all quantum elementary abelian
groups, most of which are nonquasi-triangular. 
\end{abstract}

\maketitle

\section{Introduction}\label{sec:introduction}


The Ext algebra of a Hopf algebra,
that is its Hopf algebra cohomology,
has a Lie structure arising from its embedding into Hochschild cohomology
in case the antipode is bijective.
This Lie structure is known to be abelian when the Hopf algebra
is quasi-triangular.
More specifically, Taillefer~\cite{Taillefer} constructed a Lie bracket
on cohomology arising from a category of Hopf bimodule extensions
of a Hopf algebra and showed that brackets are always zero.
In the finite dimensional case, this corresponds to Hopf algebra
cohomology of the opposite of the Drinfeld double, whose Lie structure
was also investigated by Farinati and Solotar~\cite{FS} using
different techniques. 
Farinati and Solotar  showed this is indeed the bracket arising
from an embedding into Hochschild cohomology. 
Hermann~\cite{Hermann} looked at a more general monoidal
category setting and showed that the Lie structure is
trivial in case the category is braided~\cite[Theorem 5.2.7]{Hermann}. 
It is an open question as to what happens more generally.

In this paper, we apply a new technique, the homotopy lifting method
of Volkov~\cite{Volkov}, to shed more light on this question,
particularly in the Hopf algebra setting.
Homotopy liftings were defined for 
some exact monoidal categories in~\cite{VW}, and we use them to prove
that brackets are preserved under exact monoidal functors. 
As a consequence, we show that the Lie bracket on Hopf algebra cohomology
defined by homotopy liftings agrees with 
that induced by its embedding into Hochschild cohomology,
in case the antipode is bijective. 
An advantage of the homotopy lifting method is its ability to handle
the Lie structure independently of choice of projective resolution.
A good choice of resolution 
can facilitate understanding of the Lie structure. 

We illustrate with the quantum elementary abelian groups. 
These are tensor products of copies of the Taft algebras $T_n$
that depend on an $n$th root of unity; when $n>2$ these are
nonquasi-triangular~\cite[Proposition~2.1]{Gunn}. 
The first author \cite{Karadag} found that the Lie bracket on
the Hopf algebra cohomology of $T_n$ 
is trivial by using techniques of Negron and the second author~\cite{NW},
a consequence of a larger computation of the bracket on
Hochschild cohomology of $T_n$.
This was the first known example of the Lie structure on
Hopf algebra cohomology of a nonquasi-triangular Hopf algebra.
In this paper we take a different approach, 
giving a second proof in positive degrees,
for Taft algebras $T_n$ via homotopy liftings
instead of relying on a computation for the full Hochschild cohomology ring. 
We extend this result to quantum elementary abelian groups,
as iterated tensor products of Taft algebras. 

An outline of the contents is as follows. 
In Section~\ref{sec:prelim},
we recall some definitions and the homotopy lifting method
for defining a Lie structure on the cohomology of some exact monoidal categories.
We use this method in Section~\ref{sec:change}, 
proving that the Lie structure is always 
preserved under an exact monoidal functor. 
In Section~\ref{sec:Hopf},
we apply the results of Section~\ref{sec:change} to Hopf algebra
cohomology, concluding a formula for Gerstenhaber brackets based
on homotopy liftings. 
The resulting Lie structure on the Hopf algebra cohomology
of a quantum elementary abelian group, in positive
degrees, is shown to be abelian 
in Section~\ref{sec:examples}. 

\section{Preliminaries}\label{sec:prelim}
We begin by recalling the definitions of Hopf algebra cohomology and Hochschild cohomology, as well as an embedding of one into the other
that provides a connection between their Lie structures.

Let $k$ be a field and abbreviate $\ot_k$ by $\ot$ when it will
cause no confusion. 

\subsection*{Hopf algebra cohomology}
 Let $A$ be a Hopf algebra over a field $k$ with 
coproduct $\Delta: A \rightarrow A\ot A$,
counit $\varepsilon:A\to k$ and antipode $S:A\to A$. 
{\em We will always assume that $S$ is bijective.}
Note that, since all finite dimensional and most known infinite dimensional Hopf algebras of interest have bijective antipodes, this assumption is not very restrictive.

Give the field $k$ the structure of an $A$-module, called the
{\em trivial} $A$-module, via the counit $\varepsilon$, 
i.e.~$a\cdot 1_k = \varepsilon(a)$ for all $a\in A$. 
Give $A$ the structure of an $A$-bimodule by multiplication on both sides;
equivalently, $A$ is a left $A^e$-module where $A^e := A\ot A^{op}$ and 
$A^{op}$ is the vector space $A$ with multiplication opposite to $A$. 
The {\em Hopf algebra cohomology} of $A$ is $\coh^*(A,k) := 
\Ext^*_A(k,k)$.
The {\em Hochschild cohomology} of $A$ is $\HH^*(A):=  \Ext^*_{A^e}(A,A)$.

We can embed Hopf algebra cohomology into Hochschild cohomology. 
We will need details, and we recall 
some lemmas. For  proofs, see \cite[Section 9.4]{HCSW}.
 
 \begin{lemma}\label{delta}
 There is an isomorphism of $A^e$-modules,
 $$A\cong A^e\ot_Ak,$$
 where $A^e\ot_Ak$ is the tensor induced $A^e$-module under the identification
of $A$ with the subalgebra of $A^e$ that is the image of the embedding 
$\delta:A\to A^e$ defined for all $a\in A$ by  
$$\delta(a)=\sum a_1\ot S(a_2).$$
 \end{lemma}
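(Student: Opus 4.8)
The plan is to
establish an isomorphism of $A^e$-modules between $A$ and the induced module
$A^e \ot_A k$, where the right-hand side uses the embedding $\delta$ to regard
$A$ as a subalgebra of $A^e$. First I would clarify how $A^e$ becomes a right
$A$-module through $\delta$: for $a \in A$ we have
$\delta(a) = \sum a_1 \ot S(a_2) \in A \ot A^{op} = A^e$, and right
multiplication by this element implements the $A$-action used to form the
relative tensor product. The key computation is to verify that $\delta$ is an
algebra homomorphism $A \to A^e$; this reduces to the comultiplicativity of
$\Delta$ together with the antipode axiom and the fact that $S$ is an
anti-homomorphism, so that $S(a_2)$ lands correctly in the opposite factor.

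Next I would write down the candidate map explicitly. A natural choice is to
send $a \ot b \ot 1_k \in A^e \ot_A k$ to $\sum a \, b_1 \ot S(b_2) \cdot (\text{
something})$; more cleanly, one defines a map out of $A^e \ot_A k$ by using the
universal property, sending $(a \ot b) \ot 1_k \mapsto \sum a \, S(b)$ or rather
$a \cdot (\text{left factor})$ acting on a suitable group-like-type element,
and checks it is well defined over the $A$-balancing. In the other direction,
I would send $a \in A$ to $(a \ot 1) \ot 1_k$, or more symmetrically use the
counit to collapse one tensor leg. The heart of the argument is checking these
two maps are mutually inverse and $A^e$-linear, which comes down to the
standard Hopf-algebra identities $\sum a_1 S(a_2) = \varepsilon(a) 1 =
\sum S(a_1) a_2$.

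Concretely, I expect the cleanest route is to exhibit $A^e \ot_A k \cong
A^e \ot_{\delta(A)} k$ as the quotient of $A^e$ by the left ideal generated by
$\delta(a) - \varepsilon(a) 1$, and then to produce the isomorphism with $A$ by
the map $a \ot b \mapsto a \, S(b)$ from $A^e = A \ot A^{op}$ to $A$, verifying
that it descends to the quotient precisely because of the defining relation,
and that it is bijective with inverse induced by $a \mapsto a \ot 1$. The
bijectivity in one direction will use that $S$ is bijective, which is exactly
the standing hypothesis the paper emphasizes; this is where the assumption on
$S$ earns its keep.

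The main obstacle I anticipate is bookkeeping rather than conceptual: keeping
the $A^{op}$ multiplication straight so that $\delta$ really is an algebra map
(not just a coalgebra map), and confirming that the $A^e$-module structure on
$A$ by left and right multiplication matches the induced structure on
$A^e \ot_A k$ after the identification. Since these are all standard Hopf
algebra manipulations, I would rely on the antipode and counit axioms and
Sweedler notation throughout, and I would double-check the balancing condition
$(xa) \ot 1_k = x \ot (a \cdot 1_k) = \varepsilon(a)\, x \ot 1_k$ for
$x \in A^e$, $a \in \delta(A)$, which is what forces the relation that makes
the whole construction collapse to $A$.
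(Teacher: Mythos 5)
Your overall skeleton matches the standard argument that the paper cites (it defers the proof to \cite[Section 9.4]{HCSW}, and reproduces the key computation in its Section~\ref{sec:Hopf}): realize $A^e\ot_A k$ as the quotient of $A^e$ by the left ideal generated by $\delta(c)-\varepsilon(c)1$, exhibit mutually inverse $A^e$-linear maps, and use bijectivity (really surjectivity) of $S$ to rewrite a general element with $1$ in the second tensor slot. However, your concrete candidate map is wrong, and this is the heart of the proof. The map $A^e\to A$, $a\ot b\mapsto aS(b)$, neither descends to the quotient nor is $A^e$-linear. For descent one must check that $(a\ot b)\delta(c)$ maps to $\varepsilon(c)$ times the image of $a\ot b$. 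Since $(a\ot b)(c_1\ot S(c_2))=\sum ac_1\ot S(c_2)b$ (the second factor carries the opposite multiplication), your map sends this to $\sum ac_1\,S\bigl(S(c_2)b\bigr)=\sum ac_1\,S(b)\,S^2(c_2)$, which is not $\varepsilon(c)\,aS(b)$ in general: already for a group algebra, taking $c=g$ group-like, it reads $agS(b)g\neq aS(b)$, and for the Taft algebras central to this paper $S^2\neq \mathrm{id}$ makes matters worse. $A^e$-linearity fails too: your map sends $(c\ot d)(a\ot b)=ca\ot bd$ to $caS(d)S(b)$, whereas $(c\ot d)$ must act on $A$ by $a'\mapsto ca'd$, giving $caS(b)d$.

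The map that works is the multiplication map $m(a\ot b)=ab$. Then $m\bigl((a\ot b)\delta(c)\bigr)=\sum ac_1S(c_2)b=\varepsilon(c)\,ab$ by the antipode axiom---this is exactly the mechanism by which, in your words, ``the whole construction collapses to $A$''---and $m$ is visibly $A^e$-linear for the bimodule structure on $A$. Its inverse is $a\mapsto(a\ot1)\ot_A 1_k$, as you propose; checking that the composite in the other order is the identity is where surjectivity of $S$ enters: writing $b=S(b')$, one has $(a\ot S(b'))\ot_A 1_k=\sum\bigl(aS(b_1')\ot 1\bigr)\delta(b_2')\ot_A 1_k=(aS(b')\ot1)\ot_A 1_k=(ab\ot1)\ot_A 1_k$, which is precisely the computation recorded in Section~\ref{sec:Hopf} of the paper. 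So your strategy is the right one, but the formula $a\ot b\mapsto aS(b)$ must be replaced by $a\ot b\mapsto ab$; with that correction your outline goes through.
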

 
 \begin{lemma}\label{projective}
The right $A$-module $A^e$, where $A$ acts by right multiplication 
under its identification with $\delta(A)$, is  projective.
 \end{lemma}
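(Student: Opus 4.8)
The plan is to prove the stronger statement that $A^e$ is in fact \emph{free} as a right $A$-module under the $\delta$-action, from which projectivity is immediate. First I would make the action fully explicit. One checks directly from $\Delta(ab)=\sum a_1b_1\ot a_2b_2$ that $\delta$ is an algebra embedding, $\delta(ab)=\delta(a)\delta(b)$. Since $A^e=A\ot A^{op}$ and $A$ acts on the right through its image $\delta(A)$, for $x\ot y\in A^e$ and $a\in A$ the multiplication of $A^e$ gives
$$(x\ot y)\cdot a \;=\; (x\ot y)\,\delta(a)\;=\;\sum x a_1\ot S(a_2)\,y.$$

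Next I would exhibit a free module together with a comparison map. Let $F=A\ot A$ with $A$ acting by right multiplication on the \emph{second} tensor factor only, $(u\ot w)\cdot a=u\ot wa$; this is visibly free. Define $\Psi\colon F\to A^e$ by $\Psi(u\ot w)=(u\ot 1)\,\delta(w)=\sum u w_1\ot S(w_2)$. The module-homomorphism property now costs essentially no computation: because $\delta$ is an algebra map, $\Psi(u\ot wa)=(u\ot 1)\,\delta(wa)=(u\ot 1)\,\delta(w)\,\delta(a)=\Psi(u\ot w)\cdot a$, so $\Psi$ is a homomorphism of right $A$-modules.

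The heart of the argument is to check that $\Psi$ is bijective. I would factor it as $\Psi=(\mathrm{id}_A\ot S)\circ\beta$, where $\beta\colon A\ot A\to A\ot A$ is the canonical map $\beta(u\ot w)=\sum u w_1\ot w_2$. The antipode identity $\sum w_1 S(w_2)=\varepsilon(w)\one$ shows that $\beta$ is a linear isomorphism with inverse $p\ot q\mapsto\sum p\,S(q_1)\ot q_2$. Since $S$ is bijective by our standing assumption, $\mathrm{id}_A\ot S$ is also a linear isomorphism, and hence so is $\Psi$. Therefore $A^e\cong F$ as right $A$-modules, so $A^e$ is free, and in particular projective, as a right $A$-module.

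The main obstacle is the Sweedler-notation bookkeeping needed to verify that the stated inverse of $\beta$ really is a two-sided inverse, together with the care required to track the opposite multiplication in the second factor of $A^e$ when computing the $\delta$-action. It is also worth flagging exactly where the hypothesis that $S$ is bijective is used: it enters precisely in inverting $\mathrm{id}_A\ot S$, and this is the only place bijectivity (rather than mere existence) of the antipode is needed.
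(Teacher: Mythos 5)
Your proof is correct, and it establishes the stronger statement (freeness) in essentially the standard way: the paper itself gives no argument for this lemma, deferring to \cite[Section 9.4]{HCSW}, and the proof there is the same untwisting isomorphism $A\ot A\cong A^e$ that you construct, with bijectivity of $S$ entering exactly where you flag it. One microscopic imprecision: verifying that your formula for $\beta^{-1}$ is a two-sided inverse uses \emph{both} antipode identities, $\sum w_1S(w_2)=\varepsilon(w)\one$ and $\sum S(q_1)q_2=\varepsilon(q)\one$, not just the first; both of course hold in any Hopf algebra, so nothing is lost.
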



\begin{lemma}[Eckmann-Shapiro]\label{ESL}
	Let $A$ be a ring and let $B$ be a subring of $A$ such that A is projective as a right $B$-module. Let $M$ be an $A$-module and $N$ be a $B$-module. Then $$\emph{Ext}_B^n(N,M)\cong \emph{Ext}_A^n(A\ot_BN,M), $$
where $M$ is considered to be a $B$-module under restriction.  
\end{lemma}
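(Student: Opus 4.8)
The plan is to deduce the isomorphism from the adjunction between induction and restriction, promoted to derived functors. The starting point is the tensor--hom adjunction: for any $A$-module $M$ and any $B$-module $X$ there is an isomorphism
$$\Hom_A(A\ot_B X, M)\cong \Hom_B(X,M),$$
where on the right $M$ is regarded as a $B$-module by restriction. First I would record that this isomorphism is natural in $X$, since naturality is exactly what allows it to be promoted from modules to cochain complexes.

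Next I would choose a projective resolution $P_\bullet \to N$ of $N$ as a $B$-module and apply the induction functor $A\ot_B -$, obtaining a complex $A\ot_B P_\bullet$ with an augmentation to $A\ot_B N$. Two properties of induction are needed. First, because $A$ is projective, hence flat, as a right $B$-module, the functor $A\ot_B -$ is exact; therefore $A\ot_B P_\bullet \to A\ot_B N$ is again a resolution. Second, induction sends projective $B$-modules to projective $A$-modules: this follows formally from the adjunction above together with the fact that restriction is exact, so that its left adjoint preserves projectives. Combining these, $A\ot_B P_\bullet$ is a projective $A$-resolution of $A\ot_B N$.

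With both resolutions in hand, I would compute $\Ext_A^n(A\ot_B N, M)$ as the cohomology of the complex $\Hom_A(A\ot_B P_\bullet, M)$. Applying the adjunction isomorphism in each degree, and using its naturality to check that it commutes with the induced differentials, yields an isomorphism of cochain complexes
$$\Hom_A(A\ot_B P_\bullet, M)\cong \Hom_B(P_\bullet, M).$$
Taking cohomology of both sides then gives $\Ext_A^n(A\ot_B N, M)\cong \Ext_B^n(N,M)$, as desired.

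The functorial facts about induction are routine; the hypothesis that $A$ is projective over $B$ enters essentially only in guaranteeing that $A\ot_B -$ is exact, so that the induced complex is a genuine resolution rather than merely a complex of projectives. The step requiring the most care is verifying that the degreewise adjunction isomorphisms assemble into a map of complexes, that is, that they are compatible with the differentials; this is precisely where naturality of the adjunction in its first variable is indispensable. Once this compatibility is established, the conclusion is immediate from passing to cohomology.
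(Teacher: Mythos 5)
Your proof is correct and complete. The paper does not actually prove this lemma itself---it defers to \cite[Section 9.4]{HCSW}---and your argument (inducing a projective $B$-resolution of $N$, using projectivity hence flatness of $A$ over $B$ for exactness of induction, preservation of projectives because induction is left adjoint to the exact restriction functor, and the naturality of the tensor--hom adjunction to get an isomorphism of Hom complexes) is exactly the standard proof found in that reference.
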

  
  We will consider $A$ to be a left $A$-module by the left adjoint action, which is for $a,b\in A$, 
  $$a\cdot b=\sum a_1bS(a_2) .$$
Denote this $A$-module by $A^{ad}$.

For any left $A$-module $M$, let $\coh^*(A,M):= \Ext^*_A(k,M)$. 
The following theorem is well known; see, e.g.~\cite[Theorem~9.4.5]{HCSW}. 
   We sketch a proof since we will need some of the details later. 
   
  \begin{theorem}\label{iso}
  There is an isomorphism of graded  $k$-vector spaces
  $$\HH^*(A)\cong \coh^*(A,A^{ad}).$$
  \end{theorem}

\begin{proof}
By Lemma \ref{projective}, $A^e$ is a projective as a right $A$-module, so we can apply the Eckmann-Shapiro Lemma. We replace $A$ with $A^e$, $B$ with $A$ and take $M=A$, $N=k$ in the Eckmann-Shapiro Lemma and obtain the isomorphism $\Ext_{A^e}^n(A^e\ot_Ak,A)\cong\Ext_A^n(k,A^{ad})$ as $k$-vector spaces. Lastly, we apply Lemma \ref{delta} and obtain $\Ext_{A^e}^n(A,A)\cong\Ext_A^n(k,A^{ad})$.
\end{proof}

A consequence of the theorem is an embedding of Hopf algebra cohomology
$\coh^*(A,k)$ into Hochschild cohomology $\HH^*(A)$:
Let $P\rightarrow k$ be a projective resolution of the $A$-module $k$. 
We can embed H$^*(A,k)$ into $\coh^*(A,A^{ad})\cong \HH^*(A)$ 
via the map $\eta_*:\text{Hom}_A(P,k)\to \text{Hom}_A(P,A^{ad})$ induced by the unit map $\eta: k\rightarrow A$
(see~\cite[Corollary 9.4.7]{HCSW}).
Equivalently, viewing each space $\coh^n(A,k)$ as equivalence classes 
of $n$-extensions of $k$ by $k$, the functor $A^e\ot_A -$ induces an
embedding of $\coh^*(A,k)$ into $\HH^*(A)$. 

\subsection*{Homotopy liftings and exact monoidal categories}
Gerstenhaber defined the bracket on Hochschild cohomology 
via the bar resolution. 
We will not need that definition here.
Instead, 
by using homotopy liftings as defined below, a similar operation is constructed for some exact monoidal categories in \cite[Section 4]{VW} as follows. This turns out to be equivalent to the Gerstenhaber bracket when the category is that of $A$-bimodules. 

We refer to~\cite{EGNO} for definitions and properties of monoidal categories.
Let $\C$ be an exact monoidal category and 
let $\one$ be its unit object.
As is customary, we will identify $\one\ot X$ and $X\ot \one$ with $X$
for all objects $X$ in $\C$, under assumed fixed isomorphisms (for which we will
not need notation).
One example of an exact monoidal category is the category of left
modules for a Hopf algebra over $k$, with tensor product $  \ot  $
of modules, and the unit object given by the trivial module $k$.
Another example is the category of $A$-bimodules (equivalently left
$A^e$-modules) for an associative algebra $A$ over $k$,
with tensor product $  \ot_A $ and unit object $A$.

Let $P\rightarrow \one$ be a projective resolution of $\one$ with
differential $d$ and let 
$\mu_P:P_0\to \one$ be the corresponding augmentation map. 
In~\cite[Definition~4.3]{VW}, 
the resolution $(P,d,\mu_P)$ of $\one$ is called $n$-\textit{power flat} 
if $(P^{\ot r},d^{\ot r},\mu_P^{\ot r})$ is a projective resolution of $\one$ for each $r$ ($1\leq r\leq n$). 
If $P$ is $n$-power flat for each $n\geq2$, then we say that $P$ is \textit{power flat}.
For the two categories mentioned above, projective resolutions of $\one$ are generally power flat. 

Assume $\one$ has a projective power flat resolution $P$.
For a degree $l$ morphism $\psi:P\rightarrow P$,
its differential in the Hom complex $\Hom_{\C}(P,P)$
is defined to be
\[
  \partial ( \psi) = d\psi - (-1)^l \psi d . 
\]
Let $f:P\to \one$ be an $m$-cocycle. 
Let $\Delta_P: P\rightarrow P\ot P$ be a diagonal map, i.e.~a chain map lifting the isomorphism $\one\stackrel{\sim}{\longrightarrow} \one\ot\one$. 
A degree ($m-1$) morphism $\psi_f:P\to P$ is a \textit{homotopy lifting of} ($f,\Delta_P$) if
\begin{equation}\label{eqn:psi-f}
\partial(\psi_f)=(f\ot1_P-1_P\ot f)\Delta_P
\end{equation}
and $\mu_P\psi_f\sim (-1)^{m+1}f\psi$ for some degree $-1$ map $\psi:P\to P$ such that
\begin{equation}\label{eqn:psi}
  \partial(\psi)=(\mu_P\ot1_P-1_P\ot \mu_P)\Delta_P  .
\end{equation}

The cohomology of the monoidal category $\C$ is 
$\coh^*(\C) = \coh^*(\C, \one) := \Ext^*_{\C}(\one , \one)$. 
It has a Lie bracket defined as follows~\cite[Section 4]{VW}: 
For an $m$-cocycle $f:P_m\to \one$ and an $n$-cocycle $g:P_n\to \one$, let $\psi_f$ and $\psi_g$ be homotopy liftings of ($f,\Delta_P$) and ($g,\Delta_P$) respectively. Then the cochain $[f,g]$ defined as 
\begin{equation}\label{eqn:bracket} 
[f,g]=f\psi_g-(-1)^{(m-1)(n-1)}g\psi_f
\end{equation}
induces a graded Lie bracket on $\coh^*(\C)$.
That is, it induces 
a well-defined operation on cohomology that is graded alternating and satisfies a graded Jacobi identity (cf.~\cite[Lemma 1.4.3]{HCSW}).

\section{Change of exact monoidal categories}\label{sec:change}

Let $\C$, $\C'$ be exact monoidal categories 
for which there exist power flat resolutions of their unit
objects $\one$, $\one '$. 
Let $F: \C \rightarrow \C'$ be an exact monoidal functor~\cite[Definition 2.4.1]{EGNO},
that is, $F$ is exact and there is a natural isomorphism $\eta$ of functors from $\C\times \C$ to $\C'$
given by 
\[
\eta_{X,Y}: F(X)\ot F(Y)\rightarrow  F(X\ot Y)  
\]
for all $X$, $Y$ in $\C$,  $F(\one)\cong \one'$ and $(F,\eta)$ satisfies the monoidal structure axiom of~\cite[Definition~2.4.1]{EGNO}, that is, the following diagram commutes for all objects $X,Y,Z$ in $\C$:
\begin{equation}\label{eqn:monoidal}
\xymatrix{
(F(X)\ot F(Y))\ot F(Z)\ar[d]^{\eta_{X,Y}\ot 1_{F(Z)}} \ar[rr]^{\sim} 
    && F(X)\ot (F(Y)\ot F(Z))\ar[d]^{1_{F(X)}\ot\eta_{Y,Z}}\\
F(X\ot Y)\ot F(Z)\ar[d]^{\eta_{X\ot Y,Z}} && F(X)\ot F(Y\ot Z)\ar[d]^{\eta_{X,Y\ot Z}}\\
F((X\ot Y)\ot Z) \ar[rr]^{\sim}&& F(X\ot (Y\ot Z))
}
\end{equation}
(The horizontal isomorphisms are given by the associativity constraint
for $\C'$ and the image of the associativity constraint for $\C$ under $F$,
respectively. We will not need notation for these isomorphisms.)

Denote the isomorphism from $F(\one)$ to $\one '$ by $\phi$. Then the following diagrams commute for
all objects $X$ by~\cite[Proposition 2.4.3]{EGNO}; we have chosen to show
diagrams involving 
the inverse maps $\eta^{-1}_{\one, X}$ and $\eta^{-1}_{X, \one}$ since
we will need these later.
The unlabeled isomorphisms in the diagrams are those
canonically determined by the fixed isomorphisms given by 
tensoring with unit objects and the fixed
isomorphism $F(\one) \cong \one '$. 
\begin{equation*}
\xymatrixcolsep{3pc}
\xymatrix{
F(\one\ot X)\ar[r]^{\quad \sim}\ar[d]_{\eta^{-1}_{\one,X}} & F(X)\ar[d]^{\sim}
   &&   F(X\ot \one)\ar[r]^{\quad \sim}\ar[d]_{\eta^{-1}_{X,\one}} & F(X)\ar[d]^{\sim}\\
F(\one)\ot F(X) \ar[r]^{\hspace{.4cm}\sim} & \one ' \ot F(X) && F(X)\ot F(\one)\ar[r]^{\sim} & F(X)\ot \one '
}
\end{equation*}

\quad

\begin{example}{\em 
Let $A$ be a Hopf algebra and $B$ a Hopf subalgebra of $A$.
Let $\C$ be the category of left $A$-modules and $\C'$
the category of left $B$-modules.
Let $F: \C\rightarrow \C'$ be the restriction functor,
that is on each $A$-module $X$, the action is restricted to $B$ 
via the inclusion map $B\hookrightarrow A$.
The restriction of a tensor product of modules to $B$
is isomorphic to the tensor product of their restrictions to $B$,
and thus there is a 
natural transformation $\eta$ as required.}
\end{example}

In the next section we will apply the following theorem
to shed light on the connection between the Lie structures on Hopf algebra
cohomology and on Hochschild cohomology.

Let $P$ be a projective resolution of $\one$ in $\C$ and write
$P'=F(P)$, which is a projective resolution of $F(\one)\cong\one'$ 
in $\C'$ under our assumptions. 
Let $d$ denote the differential and $\mu_P:P\rightarrow \one$ denote the augmentation map of $P$. Write $d'=F(d)$ and $\mu_{P'}=F(\mu_P)$. 
Note that $P\ot P$ is also a projective resolution of $\one$
in $\C$ with augmentation map $\mu_P\ot\mu_P$ followed
by the canonical isomorphism $\one\ot\one\stackrel{\sim}{\longrightarrow}\one$. 
Let
\[
   \Delta_{P'} = \eta^{-1}_{P,P} F(\Delta_P) ,
\]
which is a diagonal map on $P'$ under our assumptions.

\begin{theorem}\label{thm:main}
Let $\C$, $\C'$ be exact monoidal categories and let $F: \C\rightarrow \C'$
be an exact monoidal functor. 
Assume there exists a power flat resolution $P$ of $\one$ in $\C$.
Let $f\in\Hom_{\C}(P_m, \one)$, an $m$-cocycle. 
Let $\psi_f$ be a homotopy lifting of $f$ with respect to $\Delta_P$.
Then  $F(\psi_f)$ is a homotopy lifting of $F(f)$ with respect to $\Delta_{P'}$. 
\end{theorem}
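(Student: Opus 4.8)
The plan is to verify directly the two defining conditions of a homotopy lifting for $F(\psi_f)$, namely the analogue of \eqref{eqn:psi-f} and the analogue of the condition involving \eqref{eqn:psi}, by applying $F$ to the corresponding statements for $\psi_f$ and $\psi$ and then translating tensor products across the natural isomorphism $\eta$. The morphism $F(\psi_f)$ again has degree $m-1$, and since $d'=F(d)$ and $F$ is additive, $F$ commutes with the Hom-complex differential: writing $\partial'$ for the differential of $\Hom_{\C'}(P',P')$, for any degree $l$ morphism $\chi:P\to P$ we have $\partial'(F(\chi))=d'F(\chi)-(-1)^l F(\chi)d'=F(d\chi-(-1)^l\chi d)=F(\partial(\chi))$. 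This single observation is the engine of the whole argument.

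For the first condition, I would apply $F$ to \eqref{eqn:psi-f} and use the displayed commutation to obtain
\[
\partial'(F(\psi_f))=F\bigl((f\ot 1_P-1_P\ot f)\Delta_P\bigr),
\]
so that it remains to identify the right-hand side with $(F(f)\ot 1_{P'}-1_{P'}\ot F(f))\Delta_{P'}$. Writing $\Delta_{P'}=\eta^{-1}_{P,P}F(\Delta_P)$ and applying naturality of $\eta$ to the pairs of morphisms $(f,1_P)$ and $(1_P,f)$, namely $F(f\ot 1_P)\,\eta_{P,P}=\eta_{\one,P}(F(f)\ot 1_{P'})$ and the symmetric statement for $1_P\ot f$, this reduces to checking the compatibility of $\eta_{\one,P}$ and $\eta_{P,\one}$ with the canonical identifications $\one\ot P\cong P$, $P\ot\one\cong P$ and $\phi:F(\one)\to\one'$. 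This is exactly the content of the two unit-coherence diagrams displayed just before the Example.

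For the second condition, I would set $\psi'=F(\psi)$. Applying the commutation formula and the same naturality and unit bookkeeping to \eqref{eqn:psi} shows that $\psi'$ satisfies $\partial'(\psi')=(\mu_{P'}\ot 1_{P'}-1_{P'}\ot\mu_{P'})\Delta_{P'}$, which is the required analogue of \eqref{eqn:psi}. Finally, since $F$ carries chain homotopies to chain homotopies (again because $d'=F(d)$ and $F$ is additive, so a homotopy $\alpha-\beta=\partial(h)$ maps to $F(\alpha)-F(\beta)=\partial'(F(h))$), the hypothesis $\mu_P\psi_f\sim(-1)^{m+1}f\psi$ yields $\mu_{P'}F(\psi_f)=F(\mu_P\psi_f)\sim(-1)^{m+1}F(f\psi)=(-1)^{m+1}F(f)\psi'$, completing the verification.

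I expect the only real obstacle to be the coherence bookkeeping: making the identifications $\one\ot P\cong P$, $P\ot\one\cong P$ and $F(\one)\cong\one'$ fully explicit and checking that $\phi\,F(f)$ together with $\eta^{-1}_{P,P}$ reproduces $F(f)\ot 1_{P'}$ on the nose rather than merely up to a coherent isomorphism. The monoidal structure axiom \eqref{eqn:monoidal} guarantees that these isomorphisms are compatible across the tensor factors, so no associativity ambiguity can arise; the computation is routine once the unit diagrams are invoked, but it is where all the care is needed.
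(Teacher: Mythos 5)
Your proposal is correct and follows essentially the same route as the paper's own proof: apply $F$ to the defining equations \eqref{eqn:psi-f} and \eqref{eqn:psi}, use $d'=F(d)$ so that $F$ commutes with the Hom-complex differential, and then invoke naturality of $\eta$ together with the unit-coherence diagrams to identify $F\bigl((f\ot 1_P-1_P\ot f)\Delta_P\bigr)$ with $(F(f)\ot 1_{P'}-1_{P'}\ot F(f))\Delta_{P'}$. Your explicit remark that $F$ carries chain homotopies to chain homotopies is a detail the paper leaves implicit, but it is the same argument.
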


\begin{proof}
Since $\psi_f$ is a homotopy lifting of $f$, 
\[
     d \psi_f  - (-1)^{m-1} \psi_{f}d =  (f\ot 1 - 1\ot f) \Delta_P .
\]
Set $f'=F(f)$, $\psi_{f'} = F(\psi_f)$, and apply $F$ to each side of this
equation to obtain
\begin{equation}\label{right}
   d'\psi_{f'} - (-1)^{m-1} \psi_{f'} d' = F(f\ot 1 - 1\ot f) F(\Delta_P) .
\end{equation}
Since $\eta$ is a natural transformation, under our assumptions (see
the above commuting diagrams), 
the following diagram commutes: 

\begin{equation*}\label{def:tau}
\xymatrix{F(P)\ar[r]^-{F(\Delta_P)}&F(P\ot P) \ar[r]^-{F(f\ot 1)} \ar[d]_-{\eta^{-1}_{P,P}} & F(\one \ot P)
     \ar[r]^{\hspace{.2cm}\sim}\ar[d]_-{\eta^{-1}_{\one,P}} & F(P)\ar[d]^{=} \\
	 & F(P)\ot F(P) \ar[r]^-{F(f)\ot 1} & F(\one)\ot F(P)\ar[r]^{\hspace{.5cm}\sim}& F(P) }
\end{equation*}
Therefore $F(f\ot 1) F(\Delta_P)$ can be identified with
$(F(f)\ot 1)\eta^{-1}_{P,P} F(\Delta_P)$, and similarly
$F(1\ot f)F(\Delta_P)$ with $(1\ot F(f))\eta^{-1}_{P,P} F(\Delta_P)$. 
So the right side of expression (\ref{right}) is equal to 
\[
   (f'\ot 1 - 1\ot f' ) \eta^{-1}_{P,P} F(\Delta_P) ,
\]
which is in turn equal to 
$ (f'\ot 1 - 1\ot f')  \Delta_{P'}$, 
as desired. 

Since $\psi_f$ is a homotopy lifting, $\mu_P\psi_f\sim (-1)^{m+1}f\psi$ for some degree $-1$ map $\psi:P\to P$ such that $$\partial(\psi)=d\psi+\psi d=(\mu_P\ot1_P-1_P\ot \mu_P)\Delta_P.$$
By applying $F$ to both sides of the above equation, we obtain
\[
   F(d\psi+\psi d)=F(\mu_P\ot1_P-1_P\ot \mu_P)F(\Delta_P) , 
\]
and under our identifications, letting $\psi'= F(\psi)$, this is
\[
d'\psi'+\psi' d' =F(\mu_P\ot 1)F(\Delta_P)-F(1\ot \mu_P)F(\Delta_P) .
\]
Via a commutative diagram such as that above, we see this is equal to
\[ 
  (\mu_{P'}\ot 1)\Delta_{P'}-(1\ot \mu_{P'})\Delta_{P'} .
\]
Therefore, $\psi':P'\to P'$ is a degree $-1$ map such that 
$$\partial(\psi')=d'\psi'+\psi'd'=(\mu_{P'}\ot 1)\Delta_{P'}-(1\ot \mu_{P'})\Delta_{P'} , $$
and $\mu_{P'}F(\psi_f)\sim (-1)^{m+1}F(f)\psi'$,
that is, $\mu_{P'} \psi_{f'} \sim  (-1)^{m+1} f' \psi '$, as desired. 

We have shown that 
$F(\psi_f)$ is a homotopy lifting of $F(f)$ with respect to $\Delta_{P'}$.
\end{proof}

\begin{corollary}
The functor $F$ induces a graded Lie algebra homomorphism
from $\coh^*(\C)$ to $\coh^*(\C')$, in positive degrees.
\end{corollary}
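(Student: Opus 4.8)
The plan is to construct the induced map on cohomology first, and then show it intertwines the two brackets; once Theorem~\ref{thm:main} is in hand, the second step is essentially functoriality.

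First I would build $F_*$. Since $F$ is exact, it sends the projective resolution $P\to\one$ to the projective resolution $P'=F(P)\to\one'$, and for each $m$ it carries an $m$-cocycle $f\in\Hom_{\C}(P_m,\one)$ to $F(f)\in\Hom_{\C'}(P'_m,F(\one))$, which we identify with a cocycle $f'$ in $\Hom_{\C'}(P'_m,\one')$ via $\phi\colon F(\one)\xrightarrow{\sim}\one'$. Because $F$ is a functor, $d'=F(d)$, so $f\mapsto F(f)$ is a cochain map $\Hom_{\C}(P,\one)\to\Hom_{\C'}(P',\one')$; it therefore descends to a map $F_*\colon\coh^*(\C)\to\coh^*(\C')$, which is $k$-linear since an exact functor is additive.

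Next I would check that $F_*$ respects the bracket in positive degrees. Fixing cocycles $f,g$ of degrees $m,n\geq 1$ together with homotopy liftings $\psi_f,\psi_g$ of $(f,\Delta_P)$ and $(g,\Delta_P)$, I would apply $F$ to the defining cochain $[f,g]=f\psi_g-(-1)^{(m-1)(n-1)}g\psi_f$; using that $F$ preserves composition and is additive gives
\[
 F([f,g])=F(f)\,F(\psi_g)-(-1)^{(m-1)(n-1)}F(g)\,F(\psi_f).
\]
By Theorem~\ref{thm:main}, $F(\psi_f)$ and $F(\psi_g)$ are homotopy liftings of $F(f)$ and $F(g)$ with respect to $\Delta_{P'}=\eta^{-1}_{P,P}F(\Delta_P)$, so the right-hand side is precisely the cochain defining $[F(f),F(g)]$ in $\C'$ computed from these particular liftings. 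Since the bracket (\ref{eqn:bracket}) on $\coh^*(\C')$ is independent of the choice of homotopy liftings and of diagonal map, passing to cohomology yields $F_*[f,g]=[F_*f,F_*g]$.

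The positive-degree restriction is exactly the range in which the homotopy liftings, and hence the bracket, are defined: for an $m$-cocycle the lifting $\psi_f$ has degree $m-1$, and the defining condition $\mu_P\psi_f\sim(-1)^{m+1}f\psi$ degenerates when $m=0$. The only point requiring care is that the target bracket be evaluated with the transported diagonal map $\Delta_{P'}$ and liftings $F(\psi_f),F(\psi_g)$ rather than with arbitrary choices on $P'$; Theorem~\ref{thm:main} is precisely the statement licensing this, so beyond invoking it the corollary is bookkeeping.
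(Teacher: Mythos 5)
Your proposal is correct and follows exactly the paper's argument: the paper's own proof is a one-sentence appeal to Theorem~\ref{thm:main} together with formula~(\ref{eqn:bracket}), and your write-up simply makes explicit the bookkeeping (functoriality of composition, well-definedness of the bracket independent of the choice of liftings and diagonal map) that the paper leaves implicit.
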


\begin{proof}
As a consequence of the theorem and formula~\eqref{eqn:bracket},
the functor $F$ takes the Lie bracket of two elements of positive degree
in the cohomology
$\coh^*(\C)$ to the Lie bracket of their images
in $\coh^*(\C')$ under $F$.
\end{proof}

\section{Hopf algebra cohomology}\label{sec:Hopf}
Let $A$ be a Hopf algebra with bijective antipode. 
Let $\C$ be the category of (left) $A$-modules, and let $\C'$ be the category
of (left) $A^e$-modules. 
For each $A$-module $U$, let
\[
    F(U) = A^e \ot_A U ,
\]
the tensor induced module, 
where we identify $A$ with the subalgebra $\delta(A)$ of $A^e$ as in Lemma~\ref{delta}.
Also by Lemma~\ref{delta}, $F$ takes the unit object $k$ of $\C$ to
an isomorphic copy of the unit object $A$ of $\C'$.
It takes projective $A$-modules to projective $A^e$-modules
since $A^e$ is projective as an $A$-module by Lemma~\ref{projective}.
For each $A$-module homomorphism $f: U\rightarrow V$, define $F(f)$ by
\[
     F(f) ((1\ot 1)\ot_A u) =  (1\ot 1) \ot_A f(u)
\]
for all $u\in U$. Then $F$ may be viewed as 
the functor providing the embedding of Hopf algebra
cohomology $\coh^*(A,k)$ into Hochschild cohomology $\HH^*(A)$;
see the proof of Theorem~\ref{iso} and the subsequent paragraph.

For each pair of $A$-modules $U$, $V$, we wish to define
an $A^e$-module homomorphism 
\[
   \eta_{U,V}: F(U)\ot_A F(V)\rightarrow F(U\ot V)  , 
\]
that is, 
\[
   \eta_{U,V}:   
(A^e\ot_A U)\ot_A (A^e\ot_A V) 
\rightarrow 
A^e \ot_A (U\ot V) .
\]
For all $a,b\in A$, $u\in U$, and $v\in V$, set 
\[
   \eta_{U,V} 
((a\ot 1)\ot_A u)\ot_A ((1\ot b)\ot_A v)=
(a\ot b)\ot_A (u\ot v) .
\]
Note that all elements of $(A^e\ot_A U)\ot_A (A^e\ot_A V)$ can indeed
be written as linear combinations of elements of the indicated forms
and that the map is well-defined. 
For example, for all $a,b\in A$ and $u\in U$, letting $b = S(b')$,
\begin{eqnarray*}
   (a\ot b) \ot_A u &=& \sum (a S(b_1') b_2'\ot S(b_3'))\ot_A u\\
      &=& \sum (a S(b_1')\ot 1)\ot _A ((b_2'\ot S(b_3'))\cdot u) .
\end{eqnarray*}
By its definition, $\eta_{U,V}$ is an $A^e$-module homomorphism.

We check that $\eta$ is a natural transformation.
That is, the following diagram commutes for all objects
$U, V, U', V'$ and morphisms $f: U\rightarrow U'$, $g: V\rightarrow V'$:
\begin{equation*}
\xymatrix{
F(U)\ot_A F(V)
\ar[d]_{\eta_{U,V}}
\ar[rr]^{F(f)\ot_AF(g)} 
&& F(U')\ot_A F(V')\ar[d]^{\eta_{U',V'}}\\
F(U\ot V)\ar[rr]^{F(f\ot g)} 
&& F(U'\ot V')
}
\end{equation*}
Commutativity follows from the definitions of $\eta_{U,V}$, $\eta_{U',V'}$. 
To see that $\eta$ is monoidal, that is diagram (\ref{eqn:monoidal}) commutes,
it is easier to check the corresponding diagram associated to $\eta^{-1}$,
a straightforward calculation. 

For the following theorem,
we define the {\em  Gerstenhaber bracket} of two elements in
Hopf algebra cohomology $\coh^*(A,k)$ via the embedding into
Hochschild cohomology followed by the Gerstenhaber bracket on Hochschild cohomology.
The theorem states that this is the same
as their bracket defined by~(\ref{eqn:bracket}) 
on Hopf algebra cohomology $\coh^*(A,k)$ 
via homotopy liftings.
Thus the 
theorem allows us to 
bypass the need to work with Hochschild cohomology at all,
for questions purely about Hopf algebra cohomology. 

\begin{theorem}\label{thm:bracket-formula}
Let $A$ be a Hopf algebra with bijective antipode.
Let $P$ be a projective resolution of $k$ and let 
$f$, $g$ be cocycles  in $\Hom_A(P_m,k)$, $\Hom_A(P_n,k)$, respectively,
representing elements of Hopf algebra cohomology $\coh^*(A,k)$.
Let $\Delta_P$ be a diagonal map,
and let $\psi_f$, $\psi_g$ be homotopy liftings of $f,g$ with respect to $\Delta_P$.
The Gerstenhaber bracket of the corresponding elements in Hopf algebra cohomology
$\coh^*(A,k)$ is represented by
\[
   [f,g] = f\psi_g - (-1)^{(m-1)(n-1)} g\psi_f .
\]
\end{theorem}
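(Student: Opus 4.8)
The plan is to combine Theorem~\ref{thm:main} with the Corollary, specializing everything to the functor $F = A^e\ot_A -$ constructed in this section. The key observation is that the Gerstenhaber bracket on $\HH^*(A)$ is, by the definition recalled in Section~\ref{sec:prelim}, exactly the bracket~\eqref{eqn:bracket} computed in the exact monoidal category $\C'$ of $A^e$-modules (whose unit object is $A$), via homotopy liftings over any power flat resolution of $A$. Meanwhile the bracket on $\coh^*(A,k)$ defined by~\eqref{eqn:bracket} is computed in the category $\C$ of $A$-modules. Since $F$ is the functor realizing the embedding $\coh^*(A,k)\hookrightarrow\HH^*(A)$ (as established in the discussion preceding the statement), the \emph{definition} of the Gerstenhaber bracket on $\coh^*(A,k)$ made just above the theorem is precisely $[F(f),F(g)]$ computed in $\C'$.

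Concretely, first I would verify that $(F,\eta)$ satisfies all the hypotheses of Theorem~\ref{thm:main}: that $F$ is an exact monoidal functor with $F(k)\cong A$, that it sends projectives to projectives, and that $\eta$ is a monoidal natural isomorphism---all of which have just been checked in this section using Lemmas~\ref{delta} and~\ref{projective}. I would also note that $P'=F(P)$ is a power flat resolution of $A$ in $\C'$ under these assumptions, so that the bracket~\eqref{eqn:bracket} may legitimately be computed using $P'$ and the induced diagonal $\Delta_{P'}=\eta^{-1}_{P,P}F(\Delta_P)$. Then Theorem~\ref{thm:main} gives that $F(\psi_f)$ and $F(\psi_g)$ are homotopy liftings of $F(f)$ and $F(g)$ with respect to $\Delta_{P'}$.

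Next I would plug these into formula~\eqref{eqn:bracket} in $\C'$ to get
\[
  [F(f),F(g)] = F(f)F(\psi_g) - (-1)^{(m-1)(n-1)} F(g)F(\psi_f) ,
\]
which, since $F$ is a functor, equals $F\bigl(f\psi_g - (-1)^{(m-1)(n-1)} g\psi_f\bigr) = F([f,g])$. Because $F$ induces the embedding of $\coh^*(A,k)$ into $\HH^*(A)$, and because the Gerstenhaber bracket on $\coh^*(A,k)$ is \emph{defined} to be the one transported from $\HH^*(A)$ through this embedding, the cohomology class $[f,g]=f\psi_g - (-1)^{(m-1)(n-1)} g\psi_f$ computed in $\C$ maps under the embedding to the Gerstenhaber bracket of the images; as the embedding is injective, $[f,g]$ represents the Gerstenhaber bracket, proving the theorem.

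The step I expect to require the most care is confirming that the bracket on $\coh^*(A,k)$ built from homotopy liftings over $P$ in $\C$ is genuinely compatible with---and not merely formally analogous to---the bracket computed in $\C'$ over $P'=F(P)$. This amounts to tracking that applying $F$ commutes with the formation of the Hom-complex differential $\partial$ and with composition of the degree $(m-1)$ and degree $(n-1)$ maps $\psi_f,\psi_g$, together with the secondary homotopy condition $\mu_P\psi_f\sim(-1)^{m+1}f\psi$; these are exactly what Theorem~\ref{thm:main} establishes, so the genuine work has already been isolated there, and the remaining argument is the functoriality bookkeeping above.
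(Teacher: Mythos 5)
Your proposal is correct and follows essentially the same route as the paper: the paper's proof is simply that the theorem is "an immediate consequence of Theorem~\ref{thm:main} and expression~(\ref{eqn:bracket}), since the induction functor $F=A^e\ot_A-$ was shown to be an exact monoidal functor." Your write-up fills in exactly the functoriality bookkeeping (that $F(f)F(\psi_g)=F(f\psi_g)$, that the bracket on $\coh^*(A,k)$ is by definition transported through the embedding realized by $F$, and that injectivity of the embedding closes the argument) that the paper leaves implicit.
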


\begin{proof}
This is an immediate consequence of Theorem~\ref{thm:main}
and expression~(\ref{eqn:bracket}),
since we showed above that the induction functor $F$ is
an exact monoidal functor.
\end{proof}

One consequence Theorem~\ref{thm:bracket-formula} is a quick new proof that
for a cocommutative Hopf algebra in characteristic not 2, 
Gerstenhaber brackets on Hopf algebra cohomology in positive degree are always 0.
We state this as Corollary~\ref{cor:abelian} next.
Since cocommutative Hopf algebras are quasi-triangular, this is a small
special case of the well known results of Farinati, Solotar, Taillefer, and
Hermann, but it highlights our completely different approach. 

\begin{corollary}\label{cor:abelian}
Let $k$ be a field of characteristic not 2,
and let $A$ be a cocommutative Hopf algebra.
The Lie structure on Hopf algebra cohomology $\coh^*(A,k)$,
given by the Gerstenhaber bracket, is abelian in positive degrees.
\end{corollary}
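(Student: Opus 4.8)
The plan is to exploit the freedom, guaranteed by Theorem~\ref{thm:bracket-formula} together with the well-definedness of the bracket~\eqref{eqn:bracket} on cohomology, to compute $[f,g]$ using a particularly convenient diagonal map and homotopy liftings. The essential input is that when $A$ is cocommutative, the category $\C$ of left $A$-modules is \emph{symmetric} monoidal: the flip $\tau(x\ot y)=(-1)^{|x||y|}y\ot x$ is an $A$-module morphism precisely because $\sum a_1\ot a_2=\sum a_2\ot a_1$, and so on a projective resolution $P\to k$ it induces a chain map $\tau:P\ot P\to P\ot P$ lifting the identity of $\one\ot\one=k\ot k$. Consequently both $\Delta_P$ and $\tau\Delta_P$ lift the canonical isomorphism $\one\stackrel{\sim}{\longrightarrow}\one\ot\one$, hence are chain homotopic, and in characteristic not $2$ their average $\tfrac12(\Delta_P+\tau\Delta_P)$ is again a diagonal map which is moreover \emph{symmetric}, that is, $\tau\Delta_P=\Delta_P$. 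I would fix such a symmetric $\Delta_P$ at the outset.

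The heart of the argument is a short sign computation. For a homogeneous cochain $h:P\to k$ of degree $p$, the Koszul sign rule gives $(h\ot 1_P)\tau=1_P\ot h$ and $(1_P\ot h)\tau=h\ot 1_P$ after the standard identifications $k\ot P\cong P\cong P\ot k$; hence
\[
(h\ot 1_P-1_P\ot h)\tau=-(h\ot 1_P-1_P\ot h).
\]
Combining this with $\tau\Delta_P=\Delta_P$ yields $(h\ot 1_P-1_P\ot h)\Delta_P=-(h\ot 1_P-1_P\ot h)\Delta_P$, so that, since $\operatorname{char}k\neq 2$,
\[
(h\ot 1_P-1_P\ot h)\Delta_P=0.
\]
Applying this to an $m$-cocycle $f$ shows that the defining equation~\eqref{eqn:psi-f} reads $\partial(\psi_f)=0$, so that $\psi_f=0$ is an admissible homotopy lifting; applying it to $\mu_P$ shows that~\eqref{eqn:psi} reads $\partial(\psi)=0$, so that $\psi=0$ works and the auxiliary condition $\mu_P\psi_f\sim(-1)^{m+1}f\psi$ holds trivially with $\psi_f=\psi=0$.

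With these choices the formula of Theorem~\ref{thm:bracket-formula} gives, for positive-degree cocycles $f$ and $g$,
\[
[f,g]=f\psi_g-(-1)^{(m-1)(n-1)}g\psi_f=0
\]
identically at the cochain level, so the induced Lie structure on $\coh^*(A,k)$ is abelian in positive degrees. I expect the only delicate points to be the two uses of the characteristic-not-$2$ hypothesis---first in averaging to produce a symmetric diagonal map, and then in dividing by $2$ to kill $(h\ot 1_P-1_P\ot h)\Delta_P$---together with keeping the Koszul signs straight in the identity $(h\ot 1_P)\tau=1_P\ot h$, which is what produces the crucial minus sign. The conceptual content is simply that cocommutativity upgrades $\C$ to a symmetric monoidal category, exactly the braided setting in which such brackets are known to vanish.
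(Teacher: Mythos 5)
Your proposal is correct and follows essentially the same route as the paper's proof: symmetrize the diagonal map as $\tfrac12(\Delta'+\sigma\Delta')$ using cocommutativity and characteristic not $2$, observe that symmetry forces the right-hand sides of~\eqref{eqn:psi-f} and~\eqref{eqn:psi} to vanish so that $\psi_f\equiv 0$ and $\psi\equiv 0$ are valid choices, and conclude via Theorem~\ref{thm:bracket-formula}. The only difference is that you spell out the Koszul-sign computation $(h\ot 1_P-1_P\ot h)\tau=-(h\ot 1_P-1_P\ot h)$, which the paper compresses into the phrase ``by symmetry.''
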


\begin{proof}
Let $P$ be a projective resolution of $k$ as an $A$-module.
Let $\Delta': P\rightarrow P\ot P$ be a diagonal map.
Let $\sigma: P\ot P\rightarrow P\ot P$ be the signed transposition map,
i.e.~$\sigma(x\ot y) = (-1)^{|x||y|} y\ot x$ for all homogeneous $x,y\in P$.
Since $A$ is cocommutative, $\sigma \Delta'$ is also an $A$-module
homomorphism, and therefore a diagonal map.
Let
\[
    \Delta  = \frac{1}{2} (\Delta' + \sigma \Delta') ,
\]
a diagonal map as well. Note that $\Delta $ is symmetric in the sense
that $\sigma\Delta = \Delta$. 

Now, by symmetry, $(\mu_P\ot 1_P - 1_P\ot \mu_P)\Delta \equiv 0$,
and so in~(\ref{eqn:psi}), 
we can take $\psi \equiv 0$. 
Similarly, in (\ref{eqn:psi-f}), we can take 
$\psi_f \equiv 0$ for
any cocycle $f$.
Thus by Theorem~\ref{thm:bracket-formula}, Gerstenhaber brackets on the Hopf
algebra cohomology $\coh^*(A,k)$ are always 0 in positive degrees.
\end{proof}

\section{Taft algebras and
quantum elementary abelian groups}\label{sec:examples}

In this section we illustrate our results by showing that 
the Lie structure on the Hopf algebra cohomology of a Taft algebra,
and more generally of a quantum elementary abelian group, 
is abelian in positive degrees. 

Let $k$ be a field of characteristic 0
containing a primitive $n$th root $\omega$ of 1. 
Let $A$ be the Taft algebra generated by $x$ and $g$ with relations
\[
     g x = \omega x g , \ \ \ x^n=0 , \ \ \ g^n=1 .
\]
We take
\[
   \Delta (x) = x\ot 1 + g\ot x , \ \ \  \Delta(g) = g\ot g ,
\]
$\varepsilon(x)=0$, $\varepsilon(g)=1$, $S(x)= -g^{-1}x$, $S(g) = g^{-1}$.
Let $B = k[x]/(x^n)$, a subalgebra of $A$, and let 
$P$ be the following projective resolution of the 
trivial $B$-module $k$:
\[
\xymatrix{
\cdots\ar[r]^{x^{n-1}\cdot } & B \ar[r]^{x\cdot} & 
B\ar[r]^{x^{n-1}\cdot} & B\ar[r]^{x\cdot} & B\ar[r]^{\varepsilon} & k
\ar[r] & 0 
}
\]
The action of $B$ on each term is by multiplication. 
Give each component $B$ in the resolution
the structure of an $A$-module by letting 
$g\cdot x^i = \omega^i x^i$ in even degrees and
$g\cdot x^i = \omega^{i+1} x^i$ in odd degrees.
For clarity of notation, in each degree $l$,
denote the element $1_B$  of $P_l = B$ by $\epsilon_l$.
Note that $P_l$ is projective as an $A$-module since
the characteristic of $k$ is not divisible by $n$:
Specifically,
in even degrees, there are $A$-module homomorphisms $P_l\rightarrow A$
($x^i \mapsto  \frac{1}{n}\sum_{j=0}^{n-1} x^i g^j$) and
$A\rightarrow P_l$ ($x^i g^j \mapsto x^i$) whose composition
is the identity map.
In odd degrees, a similar statement is true of the maps $P_l\rightarrow A$
and $A\rightarrow P_l$ given respectively by
\[
   x^i \mapsto \left\{ \begin{array}{ll}\frac{1}{n}\sum_{j=0}^{n-1}
      x^{i+1}g^j, & \mbox{ if } i<n-1, \\
    \frac{1}{n}\sum_{j=0}^{n-1}  g^j, & \mbox{ if } i=n-1 
     \end{array}\right. 
    \ \ \ \mbox{ and } \ \ \
    x^i g^j \mapsto \left\{\begin{array}{ll}
    x^{i-1}, & \mbox{ if } i\neq 0 , \\ 
      x^{n-1}, & \mbox{ if } i =0. 
   \end{array}\right. 
\]

Calculations show that the following formulas yield
a diagonal map $\Delta: P\rightarrow P\ot P$,
that is for each $l$, $\Delta_l$ is an $A$-module homomorphism,
and $\Delta$ is a chain map lifting the canonical isomorphism 
$k\stackrel{\sim}{\longrightarrow}k\ot k$: 
\begin{eqnarray*}
\Delta_{2j+1} (\epsilon_{2j+1}) & = & 
   \sum_{i=0}^{2j+1}   \epsilon_i\ot \epsilon_{2j+1-i},\\
\Delta_{2j} (\epsilon_{2j} ) &=& \sum_{i=0}^j  \epsilon_{2i}
\ot \epsilon_{2j-2i} 
       + \sum_{i=0}^{j-1}\sum_{a=0}^{n-2}  
  \binom{n-1}{a+1}_\omega x^a\epsilon_{2i+1} \ot x^{n-2-a}\epsilon_{2j-2i-1},
\end{eqnarray*}
where $\binom{n-1}{a+1}_\omega$ is the $\omega$-binomial coefficient
defined for all nonnegative integers $a,b,c$ by
\[
    \binom{b}{c}_\omega =\frac{ (b)_\omega (b-1)_\omega\cdots
   (b-c+1)_\omega}{ (c)_\omega (c-1)_\omega\cdots (1)_\omega} 
   \ \ \ \mbox{ where } \ \ \
    (a)_\omega = 1 + \omega + \omega^2 + \cdots + \omega^{a-1} .
\]

Note that $\Delta\neq \sigma \Delta$ since $\binom{n-1}{a+1}_\omega
\neq \binom{n-1}{a}_\omega$ in general.
However, symmetry does hold
after projection onto even degrees, a key property for the proof
of the theorem below since the cohomology is concentrated
in even degrees as we see next. 

The cohomology of $A$ can be computed directly from the resolution
$P$ above, and is
\[
   \coh^*(A,k) \cong \coh^*(B, k)^G \cong k[z] ,
\]
where $\deg(z)=2$.
Alternatively, see~\cite[Corollary 3.4]{Stefan} for the relevant general theory
for skew group algebras. 

The following theorem was proven in~\cite{Karadag} by different techniques,
and more generally there, the elements of degree~0 were included.
The homotopy lifting method that we use here was designed 
for positive degree cohomology. 

\begin{theorem}
The Lie structure given by the Gerstenhaber bracket 
on the cohomology $\coh^*(A,k)$ of a Taft
algebra $A$ is abelian in positive degrees.
\end{theorem}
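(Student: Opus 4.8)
The plan is to apply Theorem~\ref{thm:bracket-formula} directly with the explicit diagonal map $\Delta$ recorded above, exploiting the fact that $\coh^*(A,k)\cong k[z]$ is concentrated in even degrees. Thus it suffices to treat cocycles $f\in\Hom_A(P_m,k)$ and $g\in\Hom_A(P_n,k)$ with $m$ and $n$ positive and even, since every positive-degree class is represented by such a cocycle and classes in odd degree vanish. I claim that for an even-degree cocycle $f$ one may take the homotopy lifting $\psi_f$ to be the zero map, so that the bracket cochain $[f,g]=f\psi_g-(-1)^{(m-1)(n-1)}g\psi_f$ is identically zero.

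To justify $\psi_f=0$, I would show that $(f\ot 1_P-1_P\ot f)\Delta=0$, so that equation~\eqref{eqn:psi-f} is solved by $\psi_f=0$ and, by the same reasoning applied to the even-degree map $\mu_P$, equation~\eqref{eqn:psi} is solved by $\psi=0$. Writing $\Delta=\Delta^{\mathrm{sym}}+\Delta^{\mathrm{asym}}$, where $\Delta^{\mathrm{asym}}$ is the asymmetric summand of $\Delta_{2j}$ in odd-by-odd resolution bidegree, namely $\sum\binom{n-1}{a+1}_\omega x^a\epsilon_{2i+1}\ot x^{n-2-a}\epsilon_{2j-2i-1}$, the even cocycle $f$ annihilates both tensor slots of $\Delta^{\mathrm{asym}}$ on degree grounds. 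On the complementary part $\Delta^{\mathrm{sym}}$—the symmetric even-by-even summand of $\Delta_{2j}$ together with the odd diagonals $\sum_i\epsilon_i\ot\epsilon_{2j+1-i}$, which are themselves symmetric since $i$ and $2j+1-i$ always have opposite parity—one has $(f\ot 1_P)\Delta^{\mathrm{sym}}=(1_P\ot f)\Delta^{\mathrm{sym}}$ by the short computation used for a symmetric diagonal in the proof of Corollary~\ref{cor:abelian}. This is precisely the ``symmetry after projection onto even degrees'' noted above.

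The main obstacle is that $\Delta$ must be checked as an $A$-module map on all of each $P_l\cong B$, not merely on the generators $\epsilon_l$; since $A$ is not cocommutative, the diagonal $A$-action, under which $x\cdot(p\ot q)=xp\ot q+gp\ot xq$, can redistribute the internal $x$-powers, and one must verify that the cancellation persists. The decisive point will be that $f$ kills $x^b\epsilon_m$ for every $b\ge 1$ because $\varepsilon(x)=0$, so only the $\epsilon_m$-slot contributes, and that the resolution bidegree—hence the even/odd bookkeeping above—is preserved by the $A$-action; careful tracking of the Koszul signs then yields $(f\ot 1_P-1_P\ot f)\Delta=0$ on all of $P$. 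With $\psi_f=0$ in hand for every even cocycle, Theorem~\ref{thm:bracket-formula} gives $[f,g]=0$, proving the bracket is abelian in positive degrees. As a sanity check, the bracket of classes of even degrees $m$ and $n$ lands in degree $m+n-1$, which is odd and hence zero in $\coh^*(A,k)\cong k[z]$; the homotopy-lifting argument refines this by exhibiting the bracket as the zero cochain rather than merely a coboundary.
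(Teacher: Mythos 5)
Your proposal is correct and follows essentially the same route as the paper: apply Theorem~\ref{thm:bracket-formula} to the explicit resolution and diagonal map, and use the parity symmetry of $\Delta$ (even cocycles kill the odd-by-odd terms, and the remaining terms are symmetric under the signed transposition) to take $\psi_f\equiv 0$ and $\psi\equiv 0$, so every bracket vanishes already at the cochain level; the paper merely streamlines the final step by invoking \cite[Lemma 1.4.7]{HCSW} to reduce to showing $[f,f]=0$ for the degree-2 generator. Your ``main obstacle'' is in fact not one: $(f\ot 1_P-1_P\ot f)\Delta_P$ is an $A$-module homomorphism and $\epsilon_l$ generates $P_l$ as an $A$-module, so its vanishing on each $\epsilon_l$ already forces it to vanish on all of $P$, with no further tracking of the diagonal $A$-action or Koszul signs required.
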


\begin{proof}
Let $P$ be the resolution of $k$ given above. 
Let $f\in\Hom_A (P_2,k)$ denote the cocycle with 
$f(\epsilon_2) =1$, a representative of the generator $z$
of the cohomology ring $\coh^*(A,k)$, described above. 
By~\cite[Lemma 1.4.7]{HCSW}, it will suffice to show that
the bracket of $f$ with itself is~0 since
$f$ represents an algebra generator of cohomology. 

We wish to find a homotopy lifting of $f$. 
First note that in (\ref{eqn:psi}), we can take $\psi \equiv 0$,
the zero map, by symmetry of the image of the diagonal map
under the projection onto $(P_0\ot P_i) \oplus (P_i\ot P_0)$ for each $i$. 
Similarly, by symmetry of the image of the diagonal map
under the projection onto $(P_{\rm{even}}\ot P)
\oplus (P\ot P_{\rm{even}})$, since $f$ has even degree, in (\ref{eqn:psi-f}),
we can take $\psi_f \equiv 0$
and indeed we can take the homotopy lifting of any 
representative element of cohomology in positive degree to be 0. 
Specifically, the map $\psi_f$ must satisfy
\[
   d\psi_f + \psi_f d =  (f\ot 1-1\ot f)\Delta_P .
\] 
The right side of this equation, evaluated on 
$\epsilon_l$, is
\[(f\ot 1 - 1\ot f) (\Delta_P(\epsilon_l)),\]
and comparing to the formulas for $\Delta_P(\epsilon_{2j})$
and $\Delta_P(\epsilon_{2j+1})$ above, 
the only terms that will be nonzero after applying $f\ot 1 
-1\ot f$ are those having $\epsilon_2$ as one of the tensor factors.
By symmetry, the resulting terms after applying
$f\ot 1$ and $1\ot f$ cancel due to their opposite signs. 
So we may take $\psi_f\equiv 0$ as claimed. 
Now, by Theorem~\ref{thm:bracket-formula},
${[} f , f {]} = 2 f \psi_f = 0$. 
\end{proof}

The following theorem is a consequence since the Lie structure of a 
tensor product of algebras reduces to that on each factor~\cite{LZ}.
Quantum elementary abelian groups are defined to
be iterated tensor products of Taft algebras~\cite{PW}.

\begin{theorem}
Let $A$ be a quantum elementary abelian group.
The Lie structure of the Hopf algebra cohomology $\coh^*(A,k)$, given
by Gerstenhaber bracket, 
is abelian in positive degrees.
\end{theorem}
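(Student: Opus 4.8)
The plan is to reduce to the single Taft algebra case already established, using the fact that the Gerstenhaber bracket of a tensor product of algebras is controlled by the brackets of its factors. Write $A = T_{n_1}\ot\cdots\ot T_{n_r}$ as an iterated tensor product of Taft algebras. Since each $T_{n_i}$ is finite dimensional and $k$ is a field, the K\"unneth theorem gives a graded algebra isomorphism $\coh^*(A,k)\cong \coh^*(T_{n_1},k)\ot\cdots\ot\coh^*(T_{n_r},k)\cong k[z_1,\dots,z_r]$, where each $z_i$ has degree $2$. In particular the cohomology is a polynomial ring generated in degree $2$, so by the reduction to algebra generators in \cite[Lemma 1.4.7]{HCSW} (the principle already used in the Taft case) it suffices to prove that $[z_i,z_j]=0$ for every pair $i,j$.

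First I would transport the computation into Hochschild cohomology. By Theorem~\ref{thm:bracket-formula} together with the corollary to Theorem~\ref{thm:main}, the embedding $\coh^*(A,k)\hookrightarrow\HH^*(A)$ is an injective homomorphism of graded Lie algebras, so the bracket of $z_i$ and $z_j$ may be computed inside $\HH^*(A)$. Since the underlying algebra of $A$ is the tensor product $T_{n_1}\ot\cdots\ot T_{n_r}$, the result of Le and Zhou~\cite{LZ} gives an isomorphism $\HH^*(A)\cong\HH^*(T_{n_1})\ot\cdots\ot\HH^*(T_{n_r})$ of Gerstenhaber algebras, under which the bracket of two decomposable classes is a signed sum, over the tensor slots, of terms of the form (bracket in one factor) tensored with (cup products in the remaining factors). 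Here I would also verify that the embedding of Hopf algebra cohomology is the tensor product of the factorwise embeddings, so that the images of $z_1,\dots,z_r$ are exactly the generators coming from the individual factors.

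With these identifications the computation is short. Under the K\"unneth decomposition $z_i$ is the class $1\ot\cdots\ot z_i\ot\cdots\ot 1$, positive only in the $i$th slot. For $i\neq j$ the bracket formula of \cite{LZ} produces only terms containing a factor $[z_i,1]$ or $[1,z_j]$, each of which vanishes because the unit class in $\coh^0$ (equivalently $\HH^0$) is central for the Gerstenhaber bracket; hence $[z_i,z_j]=0$. For $i=j$ the only surviving contribution is the self-bracket $[z_i,z_i]$ inside the single factor $\HH^*(T_{n_i})$, which is zero by the Taft-algebra theorem proved above. Thus all brackets of generators vanish, and by the reduction to generators the Lie structure on $\coh^*(A,k)$ is abelian in positive degrees.

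The step I expect to require the most care is the compatibility bookkeeping in the second paragraph: confirming that the K\"unneth isomorphism and the embedding into Hochschild cohomology intertwine the Gerstenhaber structures in the precise form needed to invoke \cite{LZ}, and tracking the Koszul signs so that the cross terms genuinely reduce to brackets against the central unit class. An alternative, more self-contained route would avoid \cite{LZ} entirely by working through homotopy liftings directly: one would equip the tensor product resolution $P^{(1)}\ot\cdots\ot P^{(r)}$ of $k$ with a diagonal map assembled, with shuffle signs, from the diagonal maps of the factors, and then verify that this diagonal map is symmetric after projection onto even degrees, exactly as in the Taft case; there the obstacle shifts to establishing that symmetry for the assembled diagonal map.
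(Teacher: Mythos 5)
Your proposal is correct and follows essentially the same route as the paper: the paper's own (very terse) proof is exactly the reduction you describe, namely that the Gerstenhaber bracket on the Hochschild cohomology of a tensor product of algebras reduces to the brackets on the factors by Le--Zhou~\cite{LZ}, combined with the embedding of Hopf algebra cohomology into Hochschild cohomology and the Taft algebra theorem. Your write-up simply makes explicit the K\"unneth identification, the reduction to algebra generators, and the vanishing of cross terms, details the paper leaves implicit.
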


\end{document}